\newcommand{\comment}[1]{}
\newtheorem{theorem}{Theorem}
\newtheorem{lemma}{Lemma}
\begin{document}
\bibliographystyle{plain}
\title{~\\[-40pt]
Note on a double binomial sum\\ 
relevant to the Hadamard\\
maximal determinant problem}
\author{Richard P.\ Brent\\
Australian National University\\
Canberra, ACT 0200,
Australia\\
\and
Judy-anne H.\ Osborn\\
The University of Newcastle\\
Callaghan, NSW 2308,
Australia\\
}

\date{\today}

\maketitle
\thispagestyle{empty}                   % To avoid page number

\begin{abstract}
We prove a double binomial sum identity 
\[ \sum_p \sum_q \binom{2k}{k+p} \binom{2k}{k+q}\, |p^2-q^2| =
 2k^2{\binom{2k}{k}}^2 \]
which differs from most binomial sum identities in that the summands involve
the absolute value function. The identity is of interest because it can be
used in proofs of lower bounds for the Hadamard maximal determinant problem.
Our proof of the identity uses a two-variable variant of the method of
telescoping sums.
\end{abstract}

\pagebreak[3]
\section{Introduction}		\label{sec:intro}

In this note we prove two results, one known and one new, involving binomial
sums where the absolute value function occurs in the summands.

Lemma~\ref{lemma:Best} is a binomial sum which has appeared
several times in the literature, e.g.~Alon and Spencer~\cite[\S2.5]{AS},
Best~\cite[proof of Theorem 3]{Best}, Brown and Spencer~\cite{BS},
Erd\H{o}s and Spencer~\cite[proof of Theorem 15.2]{ES}.
It was also a problem in the 
1974 Putnam competition~\cite[Problem A4]{Putnam74}.
Lemma~\ref{lemma:Best} can be used
to calculate the mean of each diagonal term that arises
when the probabilistic method is used to give lower bounds for
the Hadamard maximal determinant problem, as in~\cite{rpb253}.

Our new result is Theorem~\ref{thm:double_sum}, which gives a closed-form
expression for a double sum which is analogous to the single sum of
Lemma~\ref{lemma:Best}. Theorem~\ref{thm:double_sum} can be used to
calculate the second moment of each diagonal term that arises when the
probabilistic method is used to give lower bounds for the Hadamard maximal
determinant problem.  In~\cite[Theorems 2--3]{rpb253} we gave lower bounds
without using this second moment, but such results can be improved if
the second moment is known~\cite{rpbxxx}.

To prove Theorem~\ref{thm:double_sum} we first use symmetry to eliminate the
absolute value function occurring in the sum, and then use a
two-variable variant of the method of telescoping sums, where the
double sums collapse to give single sums which can be
evaluated explicitly. The ``magic'' cancellation giving the final result
suggests that a simpler proof could exist, but we have not found such
a proof.

\subsection*{Notation}

The variables $k$, $n$, $p$, $q$ denote integers
(not necessarily positive).

The binomial coefficient $\binom{n}{k}$ is defined to be zero if $k < 0$
or $k > n$. Using this convention, 
we can often avoid explicitly 
specifying upper and lower limits of sums involving binomial coefficients.  

\pagebreak[3]
\section{A well-known single sum}

Lemma~\ref{lemma:Best} is well-known, as noted
in the Introduction. We give a proof because it illustrates some of the
ideas used in the proof of Theorem~\ref{thm:double_sum}.

\begin{lemma}	\label{lemma:Best}
For all $k \ge 0$,
\[
\sum_p \binom{2k}{k+p}\,|p| = k\binom{2k}{k}\,. % magmarpb/t7
\]
\end{lemma}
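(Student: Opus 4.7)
My plan is to reduce the sum to a one-sided sum, then apply a one-variable telescoping argument, which will serve as a warm-up for the two-variable version used later for Theorem~\ref{thm:double_sum}.

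First I would use the symmetry $\binom{2k}{k+p} = \binom{2k}{k-p}$. Since the summand $\binom{2k}{k+p}|p|$ is invariant under $p \mapsto -p$ and vanishes at $p=0$, the sum collapses to
\[
\sum_p \binom{2k}{k+p}\,|p| \;=\; 2\sum_{p \geq 1} p\binom{2k}{k+p}.
\]
After the reindexing $j = k+p$, this becomes $2\sum_{j=k+1}^{2k} (j-k)\binom{2k}{j}$, so the remaining task is to evaluate $\sum_{j=k+1}^{2k} (j-k)\binom{2k}{j}$ in closed form.

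The heart of the proof is to exhibit $(j-k)\binom{2k}{j}$ as a forward difference in $j$. Combining the absorption identity $j\binom{2k}{j} = 2k\binom{2k-1}{j-1}$ with Pascal's rule $\binom{2k}{j} = \binom{2k-1}{j-1} + \binom{2k-1}{j}$ yields
\[
(j-k)\binom{2k}{j} \;=\; k\Bigl[\binom{2k-1}{j-1} - \binom{2k-1}{j}\Bigr],
\]
which is a pure telescope. Summing from $j = k+1$ to $2k$ leaves only the boundary term $k\binom{2k-1}{k}$, since $\binom{2k-1}{2k} = 0$. The lemma then follows from $\binom{2k-1}{k} = \tfrac{1}{2}\binom{2k}{k}$, after multiplying by the factor of $2$ from the symmetrization step.

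The only real obstacle is spotting the telescoping identity, and this is where I expect the analogue for Theorem~\ref{thm:double_sum} to be substantially harder: in the double-sum case one must find a two-variable ``partial summation by parts'' decomposition whose cross-cancellations leave a tractable residual. Here in the single-variable setting, however, the absorption identity and Pascal's rule do the work essentially by inspection, and the rest is bookkeeping.
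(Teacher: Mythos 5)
Your proof is correct. It opens exactly as the paper's does: symmetry under $p \mapsto -p$ together with the vanishing of the $p=0$ term reduces the sum to $2\sum_{p>0}p\binom{2k}{k+p}$, and your use of the absorption identity on $(j-k)\binom{2k}{j}$ is precisely the paper's step of writing $p=(k+p)-k$ to get $p\binom{2k}{k+p} = 2k\binom{2k-1}{k+p-1} - k\binom{2k}{k+p}$, just reindexed. Where you genuinely diverge is the endgame. The paper keeps the two resulting sums separate and evaluates each by row-sum identities, namely $2\sum_{p>0}\binom{2k-1}{k+p-1} = 2^{2k-1}$ and $2\sum_{p>0}\binom{2k}{k+p} = 2^{2k}-\binom{2k}{k}$, after which the two $k2^{2k}$ contributions cancel. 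You instead apply Pascal's rule once more to obtain the pure difference $(j-k)\binom{2k}{j} = k\bigl[\binom{2k-1}{j-1}-\binom{2k-1}{j}\bigr]$ and telescope, so that only the boundary term $k\binom{2k-1}{k} = \tfrac{k}{2}\binom{2k}{k}$ survives (for $k=0$ the one-sided sum is empty, so that degenerate case is harmless). Both finishes are equally short, but yours has a real expository advantage: the paper's stated reason for proving this well-known lemma is that it previews the techniques of Theorem~\ref{thm:double_sum}, and the proof of that theorem is driven by a two-variable telescope whose only surviving terms are those with $q=0$; your one-variable telescope with a single surviving boundary term prefigures that mechanism more faithfully than the paper's own row-sum cancellation does.
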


\begin{proof}
Let $S_0 := \sum_p \binom{2k}{k+p}\,|p|$. Splitting the sum into sums
over positive and negative $p$, 
using $\binom{2k}{k+p} = \binom{2k}{k-p}$,
and observing that the term for $p=0$ vanishes, 
we see that
\begin{equation}
S_0 = 2\sum_{p>0}p\,\binom{2k}{k+p}\,.	\label{eq:Sigma_1}
\end{equation}
Writing $p = (k+p) - k$ gives
\begin{eqnarray*}
p\binom{2k}{k+p} &=& (k+p)\binom{2k}{k+p} - k\binom{2k}{k+p}\cr
	&=& 2k\binom{2k-1}{k+p-1} - k\binom{2k}{k+p}\,.
\end{eqnarray*}
Substituting this into~\eqref{eq:Sigma_1} and using
\[2\sum_{p>0}\binom{2k-1}{k+p-1}
 = \sum_p \binom{2k-1}{p} = 2^{2k-1}\]
and
\[2\sum_{p>0}\binom{2k}{k+p}
 = \sum_{p \ne 0} \binom{2k}{k+p}
 = 2^{2k} - \binom{2k}{k}
\]
gives
\[S_0 = k2^{2k} - k2^{2k} + k\binom{2k}{k}
 = k\binom{2k}{k}\,.\]
This completes the proof of Lemma~\ref{lemma:Best}.
\end{proof}

\section{The main result~-- a double sum}

\begin{theorem}		\label{thm:double_sum}
For all $k \ge 0$, % h = 4*k is possible Hadamard order
\[
\sum_p \sum_q \binom{2k}{k+p} \binom{2k}{k+q}\, |p^2-q^2| =
 2k^2{\binom{2k}{k}}^2\,. % magmarpb/t8
\]
\end{theorem}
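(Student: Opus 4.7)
The plan is to proceed in three steps: use symmetry to eliminate the absolute value, apply a two-variable telescoping identity to collapse the resulting double sum to a single sum, and finish by evaluating that single sum with elementary binomial manipulations.

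First, I would exploit the symmetries of the summand under $p \mapsto -p$, $q \mapsto -q$, and $p \leftrightarrow q$ to restrict the sum to the region $p > q \geq 0$, where $|p^2 - q^2| = (p-q)(p+q)$. The diagonal $p = q$ contributes nothing, but the axis $q = 0$ produces a boundary piece involving the second moment $\sum_p p^2\binom{2k}{k+p}$. This moment equals $k\cdot 2^{2k-1}$, which follows by the same manipulation ($p = (k+p)-k$) as in the proof of Lemma~\ref{lemma:Best}.

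The core of the argument is a two-variable telescoping identity. Writing $a_r := \binom{2k}{k+r}$ and $b_r := \binom{2k-1}{k+r}$, the standard facts $(k+r) a_r = 2k\, b_{r-1}$ and $(k-r) a_r = 2k\, b_r$ combine via $(k+p)(k+q) - (k-p)(k-q) = 2k(p+q)$ to give
\[
(p+q)\, a_p a_q \;=\; 2k\bigl(b_{p-1}b_{q-1} - b_p b_q\bigr).
\]
Multiplying by $(p-q)$ and summing over $p > q \geq 0$, the key observation is that $(p-q)$ is invariant under the shift $p \mapsto p+1,\ q \mapsto q+1$. Reindexing the $b_{p-1}b_{q-1}$ term by this shift produces the $b_p b_q$ term up to a one-dimensional boundary along $q = -1$, so the double sum collapses to
\[
\sum_{p > q \geq 0}(p-q)(p+q)\, a_p a_q \;=\; 2k\, b_{-1}\sum_{p\geq 0}(p+1)\, b_p.
\]

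The residual single sum is elementary. Splitting into $\sum_{p\geq 0} b_p$ and $\sum_{p\geq 0} p\, b_p$ and using $\sum_{j=k}^{2k-1}\binom{2k-1}{j} = 2^{2k-2}$, $j\binom{2k-1}{j} = (2k-1)\binom{2k-2}{j-1}$, and the reduction $(2k-1)\binom{2k-2}{k-1} = \tfrac{k}{2}\binom{2k}{k}$, together with $b_{-1} = \tfrac{1}{2}\binom{2k}{k}$, yields a combination of $\binom{2k}{k}^2$ and $\binom{2k}{k}\,2^{2k}$. When substituted back, the $\binom{2k}{k}\,2^{2k}$ contributions cancel exactly against the axis term from the first step, leaving $2k^2\binom{2k}{k}^2$.

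I expect the main obstacle to be discovering the telescoping identity and recognizing that $(p-q)$ is preserved by the shift that brings the two products on its right-hand side into alignment; without this observation the double sum does not collapse. The final cancellation is the ``magic'' cancellation referred to in the introduction and is not, so far as I can see, explained structurally by this approach.
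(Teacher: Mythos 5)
Your proof is correct; I checked the key identity and the bookkeeping. With $a_r=\binom{2k}{k+r}$ and $b_r=\binom{2k-1}{k+r}$, the relations $(k+r)a_r=2k\,b_{r-1}$ and $(k-r)a_r=2k\,b_r$ do yield $(p+q)a_pa_q=2k(b_{p-1}b_{q-1}-b_pb_q)$, and since $p-q$ is invariant under $(p,q)\mapsto(p+1,q+1)$ the sum over $p>q\ge0$ collapses to $2k\,b_{-1}\sum_{p\ge0}(p+1)b_p=2k\cdot\tfrac12\binom{2k}{k}\bigl(2^{2k-3}+\tfrac{k}{4}\binom{2k}{k}\bigr)$; multiplying by $8$ and subtracting the doubly-counted axis contribution $4\binom{2k}{k}\sum_{p>0}p^2\binom{2k}{k+p}=k2^{2k}\binom{2k}{k}$ leaves exactly $2k^2\binom{2k}{k}^2$. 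This follows the paper's broad strategy (symmetry reduction, a two-variable telescope along the diagonal shift, boundary terms reduced to single sums), but your decomposition is genuinely different and cleaner. The paper writes $p^2-q^2=(p^2-k^2)-(q^2-k^2)$, which drops the upper index to $2k-2$ and produces mixed products $\binom{2k-2}{\cdot}\binom{2k}{\cdot}$ that must then be symmetrized via $\binom{2k}{k+p}=\binom{2k-2}{k+p}+2\binom{2k-2}{k+p-1}+\binom{2k-2}{k+p-2}$, after which \emph{two} separate telescopes are evaluated. You instead factor $p^2-q^2=(p-q)(p+q)$, absorb only $(p+q)$ into the symmetric difference $b_{p-1}b_{q-1}-b_pb_q$ at level $2k-1$, and carry $(p-q)$ along as a shift-invariant weight, so a single telescope suffices and the closing computation is shorter; the cancellation of the $k2^{2k}\binom{2k}{k}$ terms (the ``magic'' the paper refers to) is the same in both proofs and remains unexplained by either. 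Two small points to make explicit in a write-up: $b_{-1}=\tfrac12\binom{2k}{k}$ requires $k\ge1$, so dispose of $k=0$ separately (both sides vanish); and restricting to $p>q\ge0$ with weight $8$ counts the $q=0$ axis twice, which is precisely why the term $4\binom{2k}{k}\sum_{p>0}p^2\binom{2k}{k+p}$ must be subtracted rather than added.
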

\begin{proof}	% See magmarpb/check_second_moment_proof for formula check
Write 
\[S_1 := 
\sum_p \sum_q \binom{2k}{k+p} \binom{2k}{k+q}\, |p^2-q^2|\,.\]
The terms in $S_1$ for which $p=0$ or $q=0$ are
\begin{eqnarray*}
S_2 &:=& \sum_p \binom{2k}{k+p} \binom{2k}{k}\,p^2 +
	\sum_q \binom{2k}{k} \binom{2k}{k+q}\,q^2\\
    &=& 2\,\binom{2k}{k}\sum_{p}\binom{2k}{k+p}\,p^2\,,
\end{eqnarray*}
and the other terms are
\[S_3 := \sum_{p\ne 0} \sum_{q\ne 0} 
	\binom{2k}{k+p} \binom{2k}{k+q}\, |p^2-q^2|\,.\]
Considering the generating function
\[
  f(x) := \sum_p \binom{2k}{k+p}x^p
	= x^{-k}(1+x)^{2k} = (x^{1/2} + x^{-1/2})^{2k}\,,
\]
applying the operator $x{\rm d}/{\rm d}x$ twice, and evaluating
the resulting expression at $x=1$, we see that
\[
\sum_{p}\binom{2k}{k+p}\,p^2 = k2^{2k-1}\,,
\]
so 
\begin{equation}						\label{eq:S2}
S_2 = k2^{2k}\binom{2k}{k}\,.
\end{equation}
Also, using symmetry, we have
\[
S_3 = 8 \sum_{p>q>0} \binom{2k}{k+p}\binom{2k}{k+q}\,(p^2-q^2)\,.
\]
Now write 
$p^2-q^2 = (p-k)(p+k) - (q-k)(q+k)$,
so
\[
\binom{2k}{k+p}\binom{2k}{k+q}\,(p^2-q^2) =
\phantom{\binom{2k-2}{k+p-1}\binom{2k}{k+q}
	 \binom{2k-2}{k+p-1}\binom{2k}{k+q}}
\]
\[
2k(2k-1)\left[-\binom{2k-2}{k+p-1}\binom{2k}{k+q}
	      +\binom{2k}{k+p}\binom{2k-2}{k+q-1}\right]\,.
\]
On the right-hand side we use
\[\binom{2k}{k+p} = \binom{2k-2}{k+p} 
	+ 2\binom{2k-2}{k+p-1} + \binom{2k-2}{k+p-2}\]
and the corresponding identity for $\binom{2k}{k+q}$.
After cancelling the terms involving
$\binom{2k-2}{k+p-1}\binom{2k-2}{k+q-1}$ and regrouping, this gives
\begin{eqnarray*}
\frac{S_3}{8} &=& 2k(2k-1) \times\\
&& \left\{
\left[
\sum_{p>q>0}\binom{2k-2}{k+p}\binom{2k-2}{k+q-1}
-\sum_{p>q>0}\binom{2k-2}{k+p-1}\binom{2k-2}{k+q-2}
\right]\right. \\
&+& \left. \left[
\sum_{p>q>0}\binom{2k-2}{k+p-2}\binom{2k-2}{k+q-1}
-\sum_{p>q>0}\binom{2k-2}{k+p-1}\binom{2k-2}{k+q}
\right]
\right\}\,.\\
\end{eqnarray*}
Changing variables in the second and third summations, we obtain
\begin{eqnarray*}
\frac{S_3}{8} &=& 2k(2k-1) \times\\
&& \left\{
\left[
\sum_{p>q>0}\binom{2k-2}{k+p}\binom{2k-2}{k+q-1}
-\sum_{p>q\ge 0}\binom{2k-2}{k+p}\binom{2k-2}{k+q-1}
\right]\right. \\
&+& \left. \left[
\sum_{p>q \ge 0}\binom{2k-2}{k+p-1}\binom{2k-2}{k+q}
-\sum_{p>q>0}\binom{2k-2}{k+p-1}\binom{2k-2}{k+q}
\right]
\right\}\,.\\
\end{eqnarray*}
The expressions inside each pair of square brackets 
both involve a kind of two-variable
telescoping sum~-- the only terms that do not cancel are those
for $q=0$.  Thus, we obtain
\begin{eqnarray*}
\frac{S_3}{8} &=& 2k(2k-1) \times\\
&& \left\{
\sum_{p>0}\binom{2k-2}{k+p-1}\binom{2k-2}{k}
-\sum_{p>0}\binom{2k-2}{k+p}\binom{2k-2}{k-1}
\right\}\,.\\
\end{eqnarray*}
Taking out the factors that are independent of $p$, we are left
with two easily-evaluated sums, giving
\begin{eqnarray}
S_3 = 8k(2k-1) \!\!\!&\times&\!\!\!
\left\{
\binom{2k-2}{k}\left[2^{2k-2} - \binom{2k-2}{k-1}\right]
- \right.  \nonumber \\
&& \left.  \,\binom{2k-2}{k-1}\left[2^{2k-2}
-2\,\binom{2k-2}{k} - \binom{2k-2}{k-1}\right]
\right\}. \label{eq:S3}
\end{eqnarray}
Recall that $S_1 = S_2 + S_3$. Using the expressions~\eqref{eq:S2} for
$S_2$ and~\eqref{eq:S3} for $S_3$, and simplifying, 
we obtain the desired result for $S_1$.
\end{proof}

\subsection*{Acknowledgement}

We thank Warren D.\ Smith for his comments on earlier drafts of this note.

\end{document}